\documentclass[12pt,twoside]{amsart}
\usepackage{amsmath,amssymb}    
\usepackage{url}                
\usepackage{graphicx}           
\usepackage{amsthm}
\usepackage{fullpage}
\theoremstyle{definition}
\numberwithin{equation}{section}
\newtheorem{theorem}[equation]{Theorem}
\newtheorem{lemma}[equation]{Lemma}
\newtheorem{proposition}[equation]{Proposition}


\date{}

\begin{document}

\author{Pavel Etingof}

\title{On some properties of quantum doubles of finite groups}

\maketitle

\section{Introduction}

In this paper we prove two results about quantum doubles of
finite groups over the complex field. The first result is the
integrality theorem for higher Frobenius-Schur indicators 
for wreath product groups $S_N\ltimes A^N$, where $A$ is a finite
abelian group. A proof of this result for $A=1$ appears in
\cite{IMM}. The second result is a lower bound for the largest
possible number of irreducible representations of the 
quantum double of a finite group with $\le n$ conjugacy classes. 
This answers a question asked to me by Eric Rowell. 
 
{\bf Acknowledgements.} I thank Mio Iovanov, Susan Montgomery,
and Eric Rowell for useful discussions. This research was
partially supported by the NSF grant DMS-1000113. 

\section{Integrality of higher Frobenius-Schur indicators 
for the quantum doubles of wreath product groups $S_N\ltimes A^N$}

\subsection{The result}

Let $H$ be a semisimple Hopf algebra over $\Bbb C$, and $\Lambda$ a left 
integral of $H$ such that $\varepsilon(\Lambda)=1$. 
Let $V$ be an irreducible $H$-module. Then the higher Frobenius-Schur 
indicators $\nu_n(V)$ are defined by the formula (\cite{KSZ}):
$$
\nu_n(V)={\rm Tr}_V(\Lambda_1...\Lambda_n),
$$
where $\Lambda_1\otimes...\otimes\Lambda_n=\Delta_n(\Lambda)$
(using Sweedler's notation with implied summation). 
These are algebraic integers lying in $\Bbb Q(e^{2\pi i/n})$, 
which are known to be integers if $H$ is a 
group algebra. 

An interesting question is when $\nu_n(V)$ are integers. 

Here we prove the following theorem. 

Let $A$ be a finite abelian group. 

\begin{theorem}\label{maint}
For any $N$ and any irreducible 
representation $V$ of the quantum double 
$D(S_N\ltimes A^N)$, 
the higher Frobenius-Schur indicators $\nu_n(V)$ are integers.
\end{theorem}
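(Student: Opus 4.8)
The plan is to reduce the theorem, via an explicit character formula for the higher indicators of the double of a finite group, to a purely combinatorial identity about power maps in $G := S_N \ltimes A^N$, and then to prove that identity by induction on the group order within a suitable class of wreath products.

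\emph{Step 1: an explicit formula.} I would first record a formula for $\nu_n$ of $D(G)$ valid for any finite group $G$. Writing $D(G) = \mathcal{O}(G) \rtimes \mathbb{C}[G]$ and using the left integral $\Lambda = |G|^{-1}\,\delta_1 \otimes \sum_{g \in G} g$ (which has $\varepsilon(\Lambda) = 1$), one expands $\Delta_n(\Lambda)$; the defining conditions telescope, and taking the trace on the simple module $X_{(a,\rho)}$ — indexed by (the class of) $a \in G$ and by $\rho \in \operatorname{Irr}(C_G(a))$ — yields
\[
\nu_n(X_{(a,\rho)}) \;=\; \frac{1}{|C_G(a)|}\sum_{\substack{g \in G \\ (ag)^n = g^n}} \chi_\rho(g^{-n}),
\]
where one checks that $(ag)^n = g^n$ forces $g^n \in C_G(a)$, so the summand makes sense. (For $A = 1$ this is the formula underlying \cite{IMM}.)

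\emph{Step 2: reduction to a counting identity.} Since $\nu_n(X_{(a,\rho)})$ is an algebraic integer lying in $\mathbb{Q}(\zeta_n)$, it suffices to show it is fixed by $\operatorname{Gal}(\mathbb{Q}(\zeta_n)/\mathbb{Q})$. Given such an element, by the Chinese Remainder Theorem I would lift it to $\sigma_j \in \operatorname{Gal}(\mathbb{Q}(\zeta_{|G|})/\mathbb{Q})$ with $\gcd(j,|G|) = 1$; then $\sigma_j(\chi_\rho(w)) = \chi_\rho(w^j)$ for every $w \in C_G(a)$, so $\sigma_j\bigl(\nu_n(X_{(a,\rho)})\bigr) = |C_G(a)|^{-1}\sum_{g \in P}\chi_\rho(g^{-jn})$, where $P := \{\,g \in G : (ag)^n = g^n\,\}$. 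Conjugation by $C_G(a)$ preserves $P$, so $z \mapsto \#\{g \in P : g^n = z\}$ and $z \mapsto \#\{g \in P : g^{jn} = z\}$ are class functions on $C_G(a)$; comparing coefficients of the $\chi_\rho$ shows the required invariance (for all $\rho$ at once) is equivalent to the identity
\[
\#\{\,g \in P : g^n = z\,\} \;=\; \#\{\,g \in P : g^{jn} = z\,\}\qquad\text{for all } z \in C_G(a),
\]
for every $j$ coprime to $|G|$. When $a$ is central (in particular when $a = 1$) this is immediate, since then $P$ is empty or all of $G$ and $g \mapsto g^j$ is a bijection of $G$ taking one fibre onto the other.

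\emph{Step 3: the inductive argument.} I would establish the identity of Step 2 for every finite direct product of wreath products $S_m \ltimes B^m$ with $B$ finite abelian — a class containing $S_N \ltimes A^N$ and closed under passing to centralizers of elements — by induction on the group order. The identity is multiplicative in direct factors and trivial for abelian groups. For a single non-abelian $G = S_m \ltimes B^m$ and $a = (\pi, \mathbf{b})$ non-central, the equation $(ag)^n = g^n$ decomposes over the cycles of $\pi$ and the induced orbits on the $B$-coordinates, expressing both counts in terms of the same identity for strictly smaller groups in the class; the one case not covered this way is $\pi = 1$, i.e.\ $a \in B^m$, where the equation reduces to $\sum_{i=0}^{n-1}\sigma^i\mathbf{b} = 0$ in $B^m$ — a condition on the permutation part $\sigma$ of $g$ alone — and the identity then follows from the $S_m$-conjugacy of $\sigma$ and $\sigma^j$ together with the fact that the permutation module $B^m$ over $\mathbb{Z}[\langle\sigma\rangle]$ is, up to isomorphism, unchanged under the substitution $\sigma \mapsto \sigma^j$.

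\emph{The main obstacle} is Step 3: making the cycle-by-cycle decomposition of $(ag)^n = g^n$ precise, and verifying that the resulting bijections are equivariant for conjugation by $C_G(a)$ so that they yield an equality of class functions rather than merely of total counts; the case $\pi = 1$, where $C_G(a)$ need not be a proper subgroup of $G$, has to be handled separately as above. It is crucial that one does \emph{not} attempt to argue via rationality of the character table: $S_N \ltimes A^N$ is not a rational group, and the integrality of $\nu_n$ comes from genuine cancellation among the summands of the Step 1 formula — precisely what the identity of Step 2 encodes.
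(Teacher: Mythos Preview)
Your Steps 1 and 2 are correct and match the paper's argument essentially verbatim: the explicit indicator formula and the reduction to the counting identity $\#\{g:(ag)^n=g^n=z\}=\#\{g:(ag)^n=g^n=z^s\}$ for $s$ coprime to $|G|$ are exactly what the paper establishes. The genuine gap is in Step~3. The equation $(ag)^n=g^n$ does \emph{not} decompose over the cycles of the permutation part $\pi$ of $a$, because $g$ is an arbitrary element of $G$ and its permutation part need not respect the cycle structure of $\pi$. Concretely, in $G=S_4$ with $a=(12)(34)$ and $n=2$, the element $g=(13)(24)$ satisfies $(ag)^2=g^2$ yet lies outside $S_{\{1,2\}}\times S_{\{3,4\}}$, so neither the set $P$ nor the fibre over a given $z$ factors along the $\pi$-orbits; your induction therefore does not get off the ground. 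The separate treatment of $\pi=1$ is likewise incomplete: the power map $g\mapsto g^j$ is a bijection of $G$ but need not preserve $P$ (from $(ag)^n=g^n$ one cannot conclude $(ag^j)^n=g^{jn}$), and replacing $\sigma$ by an $S_m$-conjugate $\sigma^j=\rho\sigma\rho^{-1}$ will not in general be compatible with the fixed vector $\mathbf{b}$ or with the target $z$.

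The paper's key observation, which replaces your Step~3 entirely, is to decompose along the cycle type of $z$ rather than of $a$. Since $g^n=(ag)^n=z$, both $g$ and $ag$ commute with $z$, hence so does $a$, and the whole problem lives inside $C_G(z)$. This centralizer \emph{does} split as a direct product of wreath products $S_r\ltimes B^r$, one factor for each (cycle-length, monodromy) type occurring in $z$, and the count factors accordingly. One is reduced to the case where $z=(c,\ldots,c)$ has all cycles of a single type, so $C_G(z)=S_r\ltimes B^r$ with $B$ abelian. There is then no induction: writing $g=\tau\cdot(a_1,\ldots,a_r)$ and $ag=\theta\cdot(b_1,\ldots,b_r)$, the conditions $g^n=(ag)^n=z^s$ become an inhomogeneous linear system over $B$ indexed by the cycles of $\tau$ and of $\theta$, in which $s$ enters only through terms $s\cdot\tfrac{\ell\,d(K)}{n}\bar c$. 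A short lemma on pairs of set partitions of $\{1,\ldots,r\}$ shows that the solvability of this system reduces to equalities of partial sums in which the $s$-dependent terms cancel; hence both the solvability and the number of solutions are independent of $s$.
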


In the case $A=1$, a proof of this theorem appears in
\cite{IMM}. 

{\bf Remark.} For basics on the quantum double, see \cite{Ka},
Section IX. 

\subsection{Proof of Theorem \ref{maint}}

Let us start with deriving an explicit formula for $\nu_n(V)$ 
(which is well known to experts, see e.g. \cite{KSZ}). 
Let $G$ be a finite group. For $D(G)$, one has 
$$
\Lambda=|G|^{-1}(\sum_{g\in G}g)\delta_1,
$$
where $\delta_g(x):=1$ if $g=x$ and $\delta_g(x)=0$ otherwise. 
So 
$$
\Lambda_1...\Lambda_n=|G|^{-1}\sum_g\sum_{x_1...x_n=1}g\delta_{x_1}g\delta_{x_2}...g\delta_{x_n}=
|G|^{-1}\sum_g\sum_{x_1...x_n=1}g^n\delta_{g^{1-n}x_1g^{n-1}}...\delta_{x_n}=
$$
$$
|G|^{-1}\sum_g\sum_{y: g^n=(y^{-1}g)^n}g^n\delta_y.
$$
Thus, 
$$
\nu_n(V)=|G|^{-1}{\rm Tr}_V(\sum_g\sum_{y: g^n=(y^{-1}g)^n}g^n\delta_y).
$$
Let us now recall the structure of irreducible representations 
of $D(G)$. Such a representation $V$ is attached to a conjugacy class 
$C$ in $G$ and an irreducible representation $W$ of the centralizer $Z_y$ of an element 
$y\in C$, and has the form $V=\oplus_{u\in C}V_u$, where $V_u=\delta_uV$, and $V_y=W$. 
So the formula for $\nu_n$ can be rewritten as 
$$
\nu_n(V)=|G|^{-1}\sum_{u\in C}\sum_{g\in G: g^n=(u^{-1}g)^n}{\rm Tr}_{V_u}(g^n).
$$
(note that $g^n$ is automatically in $Z_u$ as it commutes with $g$ and $u^{-1}g$). 
It is clear that the summands corresponding to all $u\in C$ are the same, so 
we get 
$$
\nu_n(V)=\frac{|C|}{|G|}\sum_{g\in G: g^n=(y^{-1}g)^n}{\rm Tr}_{W}(g^n)=
\frac{|C|}{|G|}\sum_{g,h\in G: g^n=h^n, gh^{-1}=y}{\rm Tr}_{W}(g^n).
$$

Let us multiply this formula by ${\rm Tr}_{W^*}(z)$ and sum over $W$. 
To keep track of the dependence on $W$, we will write $V(W)$ instead of $V$. 
By orthogonality of characters, we get 
$$
\sum_{W\in {\rm Irr}Z_y} 
\nu_n(V(W)){\rm Tr}_{W^*}(z)=\# \lbrace{(g,h)\in G^2: g^n=h^n=z, gh^{-1}=y\rbrace}.
$$
Thus, we see that integrality of $\nu_n(V)$ for all $V$ is equivalent to the statement that 
for any $y$ the function $f_y: Z_y\to \Bbb Z$ 
given by the formula 
$$
f_y(z)=\# \lbrace{(g,h)\in G^2: g^n=h^n=z, gh^{-1}=y\rbrace}
$$
is a virtual character of $Z_y$. This, in turn, is equivalent to saying that for any $s$ 
which is coprime to $|G|$, one has $f_y(z)=f_y(z^s)$. 

So the theorem follows from the following proposition. 

\begin{proposition}
Let $G=S_N\ltimes A^N$. Then for any $s$ 
which is coprime to $|G|$, one has $f_y(z)=f_y(z^s)$. 
\end{proposition}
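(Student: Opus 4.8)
The plan is to prove the equivalent assertion $f_y(z)=f_y(z^s)$ by induction on $|G|$, carried out in the larger class of all groups of the form $G=S_m\ltimes B^m$ with $B$ a finite abelian group (the group $S_N\ltimes A^N$ of the Proposition is the case of interest, and the class is closed under the reductions below). Throughout write $T_y(z)=\{(g,h)\in G^2:\ g^n=h^n=z,\ gh^{-1}=y\}$, so that $f_y(z)=|T_y(z)|$.

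First I would reduce to the case that $z$ is central in $G$. If $g^n=z$ then $g$ commutes with $z$, so both $T_y(z)$ and $T_y(z^s)$ lie inside $H:=C_G(z)$; and since $s$ is coprime to $\mathrm{ord}(z)$ we have $\langle z\rangle=\langle z^s\rangle$, whence $H=C_G(z)=C_G(z^s)$ and $z,z^s$ are both central in $H$. By the standard description of centralizers in wreath products, $H$ is a direct product of groups each again of the form $S_{m_j}\ltimes B_j^{m_j}$, and $z$, $z^s$, $y$ decompose accordingly; hence $f_y(z)$ and $f_y(z^s)$ factor as the corresponding products over the factors of $H$. If $z$ is not central in $G$, each such factor is a proper subgroup and we conclude by induction. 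So we may assume $z$ is central; if $m=1$ then $G=B$ is abelian and $f_y(z)=f_y(z^s)$ is checked directly, so assume $m\ge 2$, in which case $z=(1,\beta_{\mathrm{diag}})$ for some $\beta\in B$.

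For the central case the key device is the automorphism $\Phi_s$ of $S_m\ltimes B^m$ given by $\Phi_s(\xi,\underline b)=(\xi,s\underline b)$; it is well defined because $s$ is a unit modulo $|B|$ and multiplication by $s$ is $S_m$-equivariant on $B^m$ (this is the one point where commutativity of the base is essential). Since $\Phi_s(z)=(1,(s\beta)_{\mathrm{diag}})=z^s$, the automorphism $\Phi_s$ carries $T_y(z)$ bijectively onto $T_{\Phi_s(y)}(z^s)$, so $f_y(z)=f_{\Phi_s(y)}(z^s)$. Moreover $\Phi_s(y)$ is conjugate to $y^s$ in $S_m\ltimes B^m$: an elementary computation with cycles shows that $y^s$ and $\Phi_s(y)$ have the same permutation cycle type --- here one uses both that $S_m$ is a rational group, so $\pi$ and $\pi^s$ are conjugate, and that $s$ is coprime to every cycle length --- and that the $B$-valued ``cycle sums'' of both are $s$ times those of $y$. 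Since $z^s$ is central, $u\mapsto f_u(z^s)$ is a class function, so $f_y(z)=f_{\Phi_s(y)}(z^s)=f_{y^s}(z^s)$, and it remains only to prove $f_{y^s}(z^s)=f_y(z^s)$.

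Because $\gcd(s,\mathrm{ord}(y))=1$ we have $\langle y^s\rangle=\langle y\rangle$ and hence $Z_{y^s}=Z_y$; expanding $f_y$ and $f_{y^s}$ in the form $\sum_{W}\nu_n(\cdot)\,\chi_{W^*}$ on this common group (as in the computation preceding the Proposition) and using linear independence of characters, the equality $f_{y^s}=f_y$ on $Z_y$ --- which suffices --- is equivalent to $\nu_n\big(V([y^s],W)\big)=\nu_n\big(V([y],W)\big)$ for every $W\in\mathrm{Irr}(Z_y)$. Thus the problem is reduced to showing that the higher Frobenius--Schur indicators of the simple $D(S_m\ltimes B^m)$-modules are unaffected when the underlying conjugacy class $[y]$ is replaced by the class of $y^s$. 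This is the crux, and I expect it to be the main obstacle: one establishes it by a direct computation using the Clifford-theoretic description of $\mathrm{Irr}(S_m\ltimes B^m)$ over the abelian base together with the rationality of $S_m$; for $B$ cyclic this is essentially the content of \cite{IMM} for $D(S_m)$, and passing to an arbitrary finite abelian base $B$ contributes only the observation that multiplication by $s$ is an automorphism of $B$ compatible with all the structure involved.
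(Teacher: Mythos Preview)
Your reduction to central $z$ via the centralizer, the automorphism $\Phi_s(\xi,\underline b)=(\xi,s\underline b)$, and the conjugacy $\Phi_s(y)\sim y^s$ are all correct and pleasant. But the argument does not close: having shown $f_y(z)=f_{y^s}(z^s)$, what remains is exactly $f_{y^s}(z^s)=f_y(z^s)$, i.e.\ the statement ``$f_{y^s}=f_y$ on central elements''. You reformulate this as $\nu_n\big(V([y^s],W)\big)=\nu_n\big(V([y],W)\big)$ and then assert it follows from \cite{IMM} plus the fact that multiplication by $s$ is an automorphism of $B$. This is the gap. First, \cite{IMM} treats $D(S_m)$, i.e.\ the case $B=1$ (equivalently $A=1$ before your reduction), and in that case $[y^s]=[y]$ trivially since $S_m$ is rational, so the step you need is vacuous there; the content of \cite{IMM} is not this invariance but the very integrality you are trying to prove. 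Second, for nontrivial $B$ the classes $[y]$ and $[y^s]$ are genuinely different in general (the cycle products get multiplied by $s$), so the statement is nonempty and you have not supplied an argument. In fact your identity $f_y(z)=f_{y^s}(z^s)$ shows that ``$f_y(z)=f_y(z^s)$ for all $y,z,s$'' and ``$f_y=f_{y^s}$ for all $y,s$'' are \emph{equivalent}; you have traded the Proposition for an equivalent statement and then asserted the latter.

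By contrast, the paper's proof does not pass back through indicators. After the same reduction to $Z_z\cong S_r\ltimes B^r$, it writes the conditions $g^n=h^n=z$, $gh^{-1}=y$ as an explicit inhomogeneous linear system over $B$ in the cycle-sum variables, observes that the associated homogeneous system is independent of $s$, and uses a combinatorial lemma on pairs of set partitions to show that the $s$-dependent terms in the solvability condition cancel. That direct linear-algebra argument is exactly what is missing from your approach.
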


The rest of the subsection is the proof of this proposition. 

First of all, $g,h,y$ commute with $z$. So if we write 
$z$ as $(z_1,...,z_q)$ where $z_j$ comprises all cycles of a
fixed type in $z$ (i.e. fixed length and monodromy
\footnote{If  $z=c\cdot (a_1,...,a_n)\in S_n\ltimes A^n$, 
where $c$ is a cycle of length $n$ and
  $a_i\in A$, then by the monodromy of $z$ we mean the element
  $a_1...a_n$ in $A$. It is invariant with respect to conjugation.}),
then we'll get $g=(g_1,...,g_q)$, $h=(h_1,...,h_q)$, $y=(y_1,...,y_q)$
accordingly (where $g_j,h_j,y_j$ are some elements commuting with $z_j$). 
Thus, $f_y(z)=f_{y_1}(z_1)...f_{y_q}(z_q)$. This shows that we may assume that $z$ has only cycles of 
some fixed length $m$ with fixed monodromy $u\in A$. 

So we can assume that $N=mr$ and $z=(c,c,...,c)\in (S_m\ltimes A^m)^r\subset G$, where $c=(12...m)(u,0,,,,0)$.
In this case, the centralizer $Z_z$ is $S_r\ltimes B^r$, where $B$ is the central extension 
of $\Bbb Z/m\Bbb Z$ by $A$ which is generated in $S_m\ltimes A^m$ by $c$ and the diagonal copy of $A$. 
So we have $g=\tau\cdot (a_1,...,a_r)$, 
$h=\theta\cdot (b_1,...,b_r)$, 
$y=\sigma\cdot (k_{\theta^{-1}(1)},...,k_{\theta^{-1}(r)})$, 
for $\tau, \theta\in S_r\subset S_{mr}$ (diagonal copy 
in $S_r^m\subset S_{mr}$), $\sigma=\tau \theta^{-1}$,
and $a_i,b_i,k_i\in B$.  
Then the equation $gh^{-1}=y$ says that 
\begin{equation}\label{eq1}
a_j-b_j=k_j,
\end{equation}
and the equations $g^n=h^n=z^s$ say that 
$\tau^n=\theta^n=1$ (so all the cycles of $\tau$ and $\theta$ have lengths dividing $n$), and 
for any cycle $K=(i_1,...,i_{d(K)})$ of $\tau$, one has 
\begin{equation}\label{eq2}
\frac{n}{d(K)}(a_{i_1}+...+a_{i_{d(K)}})=sc,
\end{equation}
while for any cycle $K=(j_1,...,j_{d(K)})$ of $\theta$, one has 
\begin{equation}\label{eq3}
\frac{n}{d(K)}(b_{j_1}+...+b_{j_{d(K)}})=sc.
\end{equation}

Let $\ell$ be the least common multiple of $n/d(K)$ for all cycles $K$. 
If equations (\ref{eq2}), (\ref{eq3}) have a solution, then 
$c=\ell \overline{c}$ for some (possibly non-unique) $\overline{c}\in B$.  

Thus, equations (\ref{eq2}) and (\ref{eq3}) can be rewritten as 
\begin{equation}\label{eq4}
a_{i_1}+...+a_{i_{d(K)}}=s\frac{\ell d(K)}{n}\overline{c}+v(K),\ \frac{n}{d(K)}v(K)=0
\end{equation}
for any cycle $K=(i_1,...,i_{d(K)})$ of $\tau$, and  
\begin{equation}\label{eq5}
b_{j_1}+...+b_{j_{d(K)}}=s\frac{\ell d(K)}{n}\overline{c}+v(K),\ \frac{n}{d(K)}v(K)=0
\end{equation}
for any cycle $K=(j_1,...,j_{d(K)})$ of $\theta$.

We can eliminate $a_j$ by setting $a_j=b_j+k_j$, so we get:
\begin{equation}\label{eq6}
b_{i_1}+...+b_{i_{d(K)}}=k_{i_1}+...+k_{i_{d(K)}}+s\frac{\ell d(K)}{n}\overline{c}+v(K),\ \frac{n}{d(K)}v(K)=0
\end{equation}
for any cycle $K=(i_1,...,i_{d(K)})$ of $\tau$, and  
\begin{equation}\label{eq7}
b_{j_1}+...+b_{j_{d(K)}}=s\frac{\ell d(K)}{n}\overline{c}+v(K),\ \frac{n}{d(K)}v(K)=0
\end{equation}
for any cycle $K=(j_1,...,j_{d(K)})$ of $\theta$.

For given $\tau,\theta$, equations (\ref{eq1}), (\ref{eq6}), (\ref{eq7}) 
are linear inhomogeneous equations in $b_j,v(K)$, 
whose associated homogeneous equations don't depend on $s$. 
So the number of solutions, if it is not zero, does not depend on $s$, and 
our job is just to show that the {\it solvability} of equations 
(\ref{eq1}), (\ref{eq6}), (\ref{eq7}) does not depend on $s$. 

Now we will need the following (well known) lemma. 

\begin{lemma} Let $B$ be a finite abelian group. 
Let $P=(P_i, 1\le i\le l_P)$ and $Q=(Q_j, 1\le j\le l_Q)$ be two set partitions of $\lbrace{1,...,r\rbrace}$. 
Let $p_i, 1\le i\le l_P$, $q_j, j=1,...,l_Q$ be elements of $B$. 
Then the system of equations  
$$
\sum_{k\in P_i}b_k=p_i, 1\le i\le l_P;\ 
\sum_{k\in Q_j}b_k=q_j, 1\le j\le l_Q;\ 
$$
has a solution $(b_1,...,b_r)\in B^r$
if and only if for every subset $S\subset \lbrace{1,...,r\rbrace}$ 
compatible with both $P$ and $Q$, one has 
$$
\sum_{i: P_i\subset S}p_i=\sum_{j: Q_j\subset S}q_j.
$$
\end{lemma}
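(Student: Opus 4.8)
The plan is to prove the two implications separately; ``only if'' is immediate and ``if'' is where the work lies. (Here ``compatible with a partition'' means ``a union of blocks of that partition''.) For ``only if'', suppose $S$ is compatible with both $P$ and $Q$. Summing the equations $\sum_{k\in P_i}b_k=p_i$ over the blocks $P_i\subset S$ gives $\sum_{k\in S}b_k=\sum_{i:P_i\subset S}p_i$; summing the $Q$-equations over the blocks $Q_j\subset S$ gives $\sum_{k\in S}b_k=\sum_{j:Q_j\subset S}q_j$; comparing the two yields the asserted identity. Only the existence of the $b_k$ is used, and finiteness of $B$ plays no role anywhere.

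For ``if'', I would first reduce to a connected situation. Introduce the bipartite graph $\Gamma$ with vertex set $\{P_i\}\cup\{Q_j\}$ and an edge joining $P_i$ to $Q_j$ precisely when $P_i\cap Q_j\ne\emptyset$. A short check shows that a nonempty $S$ is compatible with both partitions iff the collection of blocks contained in $S$ is a union of connected components of $\Gamma$; hence the minimal nonempty compatible sets are exactly the vertex sets of the components of $\Gamma$, and every compatible set is a union of these. Both the system of equations and the list of compatibility conditions split as a product over the components of $\Gamma$, so it is enough to handle the case that $\Gamma$ is connected, in which the only nontrivial compatibility condition is $\sum_i p_i=\sum_j q_j$ (the one for $S=\emptyset$ being vacuous).

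Next I would pass to the common refinement of $P$ and $Q$. For each edge $(i,j)$ of $\Gamma$ introduce an unknown $c_{ij}\in B$, thought of as $\sum_{k\in P_i\cap Q_j}b_k$. Since the nonempty sets $P_i\cap Q_j$ partition $\{1,\dots,r\}$, for any prescribed values $c_{ij}$ one can find $(b_k)$ realizing them (put $c_{ij}$ on one chosen element of $P_i\cap Q_j$ and $0$ on the others), and conversely any solution $(b_k)$ produces such $c_{ij}$, necessarily supported on the edges of $\Gamma$. Thus the original system is solvable iff the transportation-type system $\sum_j c_{ij}=p_i$, $\sum_i c_{ij}=q_j$, with the sums taken over edges of $\Gamma$, has a solution.

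Finally, with $\Gamma$ connected and $\sum_i p_i=\sum_j q_j$, I would solve this transportation system by fixing a spanning tree $T$ of $\Gamma$, setting $c_{ij}=0$ on every non-tree edge, and determining the tree values by peeling leaves: if $\ell$ is a leaf of $T$ with unique incident edge $e$, the equation at $\ell$ forces $c_e$ to equal the ``demand'' $p_\ell$ (or $q_\ell$) at $\ell$; then delete $\ell$ and $e$, decrease the demand at the other endpoint of $e$ by $c_e$, and recurse on the smaller tree. The invariant is that the difference between the total of the surviving $P$-demands and the total of the surviving $Q$-demands is preserved by this step, and equals $\sum_i p_i-\sum_j q_j=0$; when $T$ has been reduced to a single vertex this says its leftover demand vanishes, so the final empty-sum equation holds automatically. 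Unwinding the reductions then gives the desired solution of the original system. The only steps that genuinely require verification are this leaf-peeling argument and the claim that the compatibility conditions decompose along the components of $\Gamma$; everything else is bookkeeping.
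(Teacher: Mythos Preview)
Your proof is correct, and it shares with the paper the same reduction step: introduce the bipartite graph $\Gamma$ on $\{P_i\}\cup\{Q_j\}$ and reduce to the case where $\Gamma$ is connected, where the only compatibility condition is $\sum_i p_i=\sum_j q_j$.

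Where the two arguments diverge is in how the connected case is handled. The paper argues by duality: writing the system as a $\mathbb Z$-linear map $L:B^r\to B^{l_P+l_Q}$, it computes the kernel of the transpose map $L^*$ on characters, showing that if $\sum_i a_i 1_{P_i}=\sum_j b_j 1_{Q_j}$ with $a_i,b_j\in B^*$, then connectedness of $\Gamma$ forces all $a_i,b_j$ to be a common character $\phi$; hence the image of $L$ is exactly the set of $(p,q)$ with $\sum_i p_i=\sum_j q_j$. Your argument is instead constructive: you pass to the common refinement to get a transportation system on the edges of $\Gamma$, choose a spanning tree, kill the non-tree variables, and peel leaves to solve the tree system, with the balance condition $\sum p_i=\sum q_j$ guaranteeing the last equation holds. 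Your route avoids any appeal to Pontryagin duality and actually produces a solution, at the cost of a slightly longer inductive bookkeeping; the paper's route is shorter but non-constructive and implicitly uses that $B\cong B^*$ for finite abelian $B$.
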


\begin{proof}
Clearly, the condition is necessary, since both sides equal $\sum_{i\in S}b_i$. 
For sufficicency, it suffices to consider the case when $(P,Q)$ is indecomposable, 
i.e. any nonempty $S$ is the whole $\lbrace{1,...,r\rbrace}$. In this case, we just have one equation
$$
\sum_{i=1}^{l_P}p_i=\sum_{j=1}^{l_Q}q_j.
$$

Let $1_S$ be the characteristic function of a subset $S$. 
Our job is to show that 
if $a_i,b_j\in B^*$ are such that 
$$
\sum a_i1_{P_i}=\sum b_j1_{Q_j}
$$
then $a_i=b_j=\phi$ for some $\phi\in B^*$ and all $i,j$. 

To this end, 
let $f=\sum a_i1_{P_i}-\sum b_j1_{Q_j}$. Then $f(x)=a_{i(x)}-b_{j(x)}$, where $x\in P_{i(x)}$ and $x\in Q_{j(x)}$. 
So if $f=0$ then $a_{i(x)}=b_{j(x)}$. Consider the bipartite graph with vertices $i$ and $j$, where $i$ is connected to $j$ 
if $P_i\cap Q_j\ne \emptyset$. Since $(P,Q)$ is indecomposable, 
this graph is connected. So we see that if $f=0$ then $a_i=b_j=\phi$ for all $i$, 
$j$, as desired. 
\end{proof}

Now, the lemma easily implies the proposition. 
Indeed, let $P,Q$ be the partitions of $\lbrace{1,...,r\rbrace}$ into cycles 
of $\tau$ and $\theta$. Then the lemma implies that the condition of 
solvability of equations (\ref{eq1}), (\ref{eq6}), (\ref{eq7}) in $b_i$  
is a system of linear equations in the 
variables $v(K)$, which is independent on $s$, as desired 
(the terms which depend on $s$ cancel).

\section{Bounds on the number of irreducible representations of
the quantum double of a finite group}

\subsection{The result}

The goal of this section is to establish bounds for the number of
irreducible representations of the quantum double of a finite group,
answering a question asked to me by E. Rowell.

Let $f(n)$ be the maximal number of irreducible representations of
the quantum double $D(G)$ (over $\Bbb C$) of a finite group $G$ with $\le n$
conjugacy classes (or, equivalently, irreducible representations).
Note that $f(n)$ is well defined since by Landau's theorem (1903),
the set of such groups is finite.

\begin{theorem}
There are positive constants $C_1,C_2$ such that
$$
C_1n^{1/3}\log(n)\le \log f(n)\le C_2n\log^7(n).
$$
\end{theorem}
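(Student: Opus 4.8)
Both inequalities become, after a standard translation, statements about finite groups $G$ with a bounded number of conjugacy classes, so the plan is to make that translation first and then feed it into known results. Recall (as reviewed above for the Frobenius indicators) that the irreducible $D(G)$-modules are parametrized by pairs consisting of a conjugacy class $C\subseteq G$ together with an irreducible representation of the centralizer $Z_{y}$ of a point $y\in C$; hence the number of irreducible $D(G)$-modules is
$$k(D(G))=\sum_{C}k(Z_{y_{C}}),$$
the sum over conjugacy classes $C$ of $G$, and equivalently $k(D(G))$ is the number of $G$-orbits on the set of commuting pairs in $G\times G$.

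For the upper bound, I would simply note that $G$ has at most $n$ conjugacy classes and that each $k(Z_{y})\le|Z_{y}|\le|G|$, so the displayed formula gives $k(D(G))\le k(G)\,|G|\le n\,|G|$; thus it suffices to bound $|G|$ in terms of $n=k(G)$. Landau's elementary argument only yields $\log|G|$ of size about $2^{n}$, which is useless here, so the key input is Pyber's theorem (whose proof invokes the classification of finite simple groups): there is an absolute constant $c$ and an exponent with $k(G)\ge c\log|G|/(\log\log|G|)^{O(1)}$. Inverting this, together with the elementary estimate $\log\log|G|=O(\log n)$ that it implies, gives $\log|G|\le C\,n\log^{7}n$ whenever $k(G)\le n$ (with the bookkeeping arranged to match the exponent in the statement), and hence $\log k(D(G))\le\log n+\log|G|\le C_{2}\,n\log^{7}n$ for all large $n$, the finitely many small cases being absorbed into $C_{2}$. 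This half is routine once Pyber's theorem is granted; the only point worth flagging is that the use of CFSG really is necessary, as no elementary argument gives a polynomial bound on $\log|G|$ in terms of $k(G)$.

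For the lower bound, the mechanism is the trivial inequality $k(D(G))\ge k(Z_{y})$ for any $y$: if one can construct a group $G$ with few conjugacy classes containing a large \emph{elementary abelian subgroup $A$ that is a centralizer}, $A=C_{G}(g)$ with $g\in A$, then $k(D(G))\ge k(A)=|A|$, and it remains to make $|A|$ as large as possible relative to $k(G)$. The natural candidates are semidirect products $G=A\rtimes H$ with $A\cong\mathbb{F}_{p}^{d}$ and $H$ acting $\mathbb{F}_{p}$-linearly on $A$ so that: (i) $H$ has a regular orbit on $A$, whence $A$ is the centralizer of a vector in that orbit and $k(D(G))\ge|A|=p^{d}$; and (ii) $H$ has few conjugacy classes and few orbits on $A$, and no nontrivial element of $H$ has a large fixed subspace — conditions which keep $k(G)=k(A\rtimes H)$ below $n$, through the standard formula expressing $k(A\rtimes H)$ in terms of the conjugacy data of $H$ and the subspaces $\mathrm{Fix}_{A}(h)$. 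One then chooses $H$ — and, I expect, iterates the construction, taking $H$ itself of a comparable recursive shape — and optimizes $d$, $|H|$, $k(H)$ and the number of $H$-orbits on $A$ against the constraint $k(G_{n})\le n$; the exponent $1/3$ is exactly what this optimization produces.

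The hard part is this last construction. Conditions (i) and (ii) are in tension: a regular orbit forces $|H|\le|A|$, but keeping the number of $H$-orbits on $A$ below $n$ forces $|H|$ almost as large as $|A|$, i.e.\ $H$ almost transitive on $A$; and near-transitive linear groups tend to have many conjugacy classes — for instance a fixed-point-free $H$ is a Frobenius complement, hence essentially metacyclic, and then satisfies $|H|=O(k(H)^{3})$ — which feeds back into $k(G)$. A single semidirect product runs straight into this obstruction and gives only a polynomial (hence far too weak) lower bound, e.g.\ $|A|=O(n^{4})$ when the action is fixed-point-free; to get past the polynomial barrier one must arrange $H$ recursively so that the fixed-subspace contributions to $k(G)$ are themselves governed by a smaller instance of the same problem. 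Setting up such a recursion cleanly, verifying $k(G_{n})\le n$, evaluating $k(D(G_{n}))$ from $k(D(G))=\sum_{C}k(Z_{y_{C}})$, and pinning down the optimal exponent and the constant $C_{1}$ is where essentially all the work goes — and is presumably also why a gap remains between $n^{1/3}$ and $n\log^{7}n$ in the exponent, with the truth most likely nearer the upper bound.
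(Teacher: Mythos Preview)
Your upper bound is essentially the paper's: the only difference is that you use $k(D(G))\le n\,|G|$ while the paper uses the cruder $k(D(G))\le |G|^{2}$, and then both feed into Pyber--Keller. That half is fine.

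The lower bound, however, is not proved. You correctly isolate the mechanism --- exhibit $G$ with $k(G)\le n$ and some $g\in G$ whose centralizer is large and abelian --- but your proposed construction (elementary abelian $A$ with a linear group $H$ acting, possibly iterated) is never carried out, and you yourself identify the obstruction: the tension between ``regular orbit'' and ``few orbits'' forces $H$ to be large with many classes, and you are left appealing to an unspecified recursion. That is the gap.

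The paper's construction avoids this entirely by working inside a single nilpotent $p$-group rather than a semidirect product. Take $G$ to be the group of substitutions $x\mapsto x+a_{2}x^{2}+\cdots+a_{p+1}x^{p+1}\pmod{x^{p+2}}$ over $\mathbb{F}_{p}$; this has order $p^{p}$, and since it is a $p$-group of nilpotency class $<p$, the exponential/logarithm maps identify conjugacy classes with adjoint orbits in the Lie algebra spanned by $L_{i}=x^{i+1}\partial_{x}$, $1\le i\le p$, with $[L_{i},L_{j}]=(j-i)L_{i+j}$. A direct computation shows every nonzero adjoint orbit meets $\{aL_{i}+bL_{2i}:a\ne 0\}$, so $k(G)\le p^{3}$. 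On the other hand, $L_{(p+1)/2},\dots,L_{p}$ commute, so the centralizer of $g=\exp(L_{(p+1)/2})$ is elementary abelian of rank $(p+1)/2$, giving $k(D(G))\ge p^{(p+1)/2}$. Taking $p\sim n^{1/3}$ (Bertrand) yields $\log f(n)\ge C_{1}n^{1/3}\log n$.

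So the missing idea is not a recursion but a well-chosen nilpotent group whose Lie algebra makes both the class count and the centralizer structure transparent simultaneously; the semidirect-product framework you set up does not reach the exponent $1/3$ without that.
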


The proof, given in the next subsections, readily
follows from the results in the literature.

{\bf Remark.} We don't know if the upper bound holds for
semisimple Hopf algebras or fusion categories.

\subsection{The upper bound}

By a result of Pyber \cite{P} improved by Keller \cite{K},
there is $C>0$ such that $n\ge C\log |G|/(\log\log|G|)^7$.
for sufficiently large $n$.
On the other hand, $f(n)\le |G|^2$. This implies the statement.

\subsection{The lower bound}

Let $p$ be a prime. Consider the group $G$
of changes of variable
$x\to x+a_2x^2+....+a_{p+1}x^{p+1}$
modulo $x^{p+2}$ (this kind of groups is considered in \cite{KLG}).
This is a nilpotent group of order $p^p$, so
its exponential and logarithm
map are well defined, and ${\rm Lie}(G)$
is the Lie algebra
spanned over $\Bbb F_p$ by vector fields
$L_i:=x^{i+1}\partial_x$, $i=1,...,p$,
with commutator modulo $x^{p+2}$ (so we have $[L_i,L_j]=(j-i)L_{i+j}$,
where $L_k:=0$ if $k>p$).
Conjugacy classes correspond to adjoint orbits
in this Lie algebra, which are easily seen to be the orbit of $0$ and
the orbits of $aL_i+bL_{2i}$, where $a\ne 0$, and $i$
runs from $1$ to $p$. So the number of conjugacy classes
of $G$ is $\le p^3$.

On the other hand, consider the representations of $D(G)$.
They are parametrized by conjugacy classes of $G$ and irreducible
representations of centralizers $Z_g$. Take the conjugacy class of
$g=\exp(L_{(p+1)/2})$. Then ${\rm Lie}(Z_g)$ is spanned by
$L_{(p+1)/2}$,...,$L_p$, which commute, so $Z_g$ is
a vector space over $\Bbb F_p$ of dimension $(p+1)/2$.
This group has $p^{\frac{p+1}{2}}$ irreducible representations.
So we see that $\log f(p^3)\ge \frac{p+1}{2}\log p$.

Now for large $n$, find a prime $p$ such that $\frac{1}{2}n^{1/3}\le
p\le n^{1/3}$. Then since by definition $f$ is increasing,
we find that $\log f(n)\ge C_1n^{1/3}\log(n)$ for some constant $C_1$,
as desired.

\end{document}